\newtheorem{Theo}{Theorem}
\newtheorem{cor}{Corollary} 
\newtheorem{prop}[cor]{Proposition} 
\newtheorem{lem}[cor]{Lemma}
\theoremstyle{definition} 
\newtheorem{defn}[cor]{Definition} 
\newtheorem{rem}[cor]{Remark} 
\newtheorem{ex}[cor]{Example} 
\newcommand{\Z}{\mathbb{Z}}
\newcommand{\MMM}{\mathcal{M}}
\newcommand{\SL}{\mbox{SL}}
\newcommand{\psl}{\mbox{PSL}}
\newcommand{\RR}{\mathbb{R}}
\newcommand{\NN}{\mathbb{N}}
\newcommand{\trans}{\mbox{Trans}}
\newcommand{\emtrans}{\mbox{\em Trans}}
\newcommand{\gal}{\mbox{Gal}}
\newcommand{\slzwei}{\mbox{SL}_2}
\newcommand{\id}{\mbox{id}}
\newcommand{\aut}{\mbox{Aut}}
\newcommand{\OriSquare}[5]
{
\begin{xy}
(0,0)="Pos";
"Pos"+(1,0) **@{-}; ?*h!U!/^2pt/{\text{\scriptsize #5}}, 
"Pos"+(1,1) **@{-}; ?*h!L!/^1pt/{\text{\scriptsize #2}},
"Pos"+(0,1) **@{-}; ?*h!D!/^1pt/{\text{\scriptsize #3}},
"Pos" **@{-}; ?*h!R!/^1pt/{\text{\scriptsize #4}},
"Pos"+(0.5,0.5) *h{#1};
\end{xy}
}
\begin{document}

\title{Finite translation surfaces with maximal number of translations}

\author{J.-C. Schlage-Puchta}
\address{Jan-Christoph Schlage-Puchta: Institute for Mathematics, University of Rostock, 18051 Rostock, Germany}
\email{jan-christoph.schlage-puchta@uni-rostock.de}
\urladdr{}
\author{G. Weitze-Schmith\"usen}
\address{Gabriela Weitze-Schmith\"usen: Institute for Algebra and Geometry, Karlsruhe Institute of Technology (KIT), 76128 Karlsruhe, Germany}
\email{weitze-schmithuesen@kit.edu}

\begin{abstract}
The natural automorphism group of a translation surface
is its group of translations. For finite translation surfaces of genus $g \geq 2$ 
the order of this group is naturally bounded in terms of $g$ due to a Riemann-Hurwitz formula argument.
In analogy with classical Hurwitz surfaces, we call surfaces which achieve the maximal bound 
{\em Hurwitz translation surfaces}. We study for which  $g$ there exist Hurwitz translation surfaces
of genus $g$.
\end{abstract}
\maketitle

\section*{Introduction}

A {\em finite translation surface} is a closed Riemann surface together
with a translation atlas $\mu$, i.e. an atlas all of whose transition maps are translations,
which is defined on $X$ up to finitely many cone points, see \Cref{basics} for a more detailed description
and references. They very naturally show up when studying Teichm\"uller spaces,
since every finite translation surface comes from a pair $(X,\omega)$, where $X$ is a Riemann surface
and $\omega$ is a non zero holomorphic differential. The quadratic differential 
$\omega^2$ defines a point in the cotangent
space of Teichm\"uller space which determines a {\em Teichm\"uller disk}, see e.g. \cite{EG}.  
In his seminal work in \cite{Veech3} Veech studied translation surfaces in the context of billiards.
He gave  a beautiful relation between the geodesic flow on the
translation surface $(X,\omega)$ and a subgroup $\SL(X,\omega)$ of $\slzwei(\RR)$ 
today called {\em Veech group} that can be associated to $(X,\omega)$. 
A particular nice class of translation surfaces are  {\em square-tiled surfaces} also called {\em origamis}
which are obtained by gluing a finite number of squares along their edges via translations such
that the resulting space is a connected surface. 
In general, translation surfaces are distinguished by the types of their singularities.
The cone angle of a singularity is always a multiple of $2\pi$.
The space of all translation surfaces which have $n$ singularities of 
cone angles $(k_1+1)\cdot 2\pi$, \ldots, $(k_n+1)\cdot 2\pi$ 
is called  $H(k_1, \ldots, k_n)$.\\

The natural automorphism group of a translation surface is the group 
of its translations. 
In this article we study the following question. Fix a natural number $g$.
What is the maximal number of translations a translation surface in genus $g$ can have?
This question can be seen as analogon for translation surfaces to a problem that 
Hurwitz studied in the 1890's, namely how many automorphisms 
a Riemann surface of genus $g \geq 2$ can have. He obtained his famous upper bound of $84(g-1)$ 
\cite[Abschnitt II.7,p.424]{Hurwitz}.
This result from 1893 is today known as {\em Hurwitz automorphism theorem}. Since then there has been a vivid study
to describe the Riemann surfaces which achieve this upper bound, see e.g. \cite{Conder1990}
for an overview. They are called
{\em Hurwitz surfaces} and do not occur in all genera, see \cite{Accola1968} and \cite{MacLachlan1969}.
More precisely it is shown in \cite{Larsen2001} that the genera in which Hurwitz surfaces
exist are as rare as perfect cubes; whereas one can deduce from \cite{Schlage-Puchta_Wolfart2006}
that for the good genera there have to be a lot of Hurwitz surfaces.\\

Similarly as for Hurwitz surfaces there is a bound
on the order of  the translation groups of translation surfaces of genus $g$, 
namely $c(g) = 4g-4$. It is achieved only for special $g$'s.
In analogy to Hurwitz's theory we call 
translation surfaces with the maximal possible number $4g-4$ of translations 
{\em Hurwitz translation surfaces}.
It turns out that Hurwitz translation surfaces belong to the special class of translation surfaces 
formed by the origamis. They are even {\em normal origamis} (see \Cref{basics} for the definition). 
Furthermore they always belong to the principal stratum $H(1,\ldots,1)$.
More precisely, we obtain the characterisation of
Hurwitz translation surfaces stated in Theorem~\ref{Theo-criterion}

\begin{Theo}[proven in \Cref{section-tHs}] \label{Theo-criterion}
Let $g \geq 2$.
\begin{enumerate}
\item[i)]
  A finite translation surface $(X,\mu)$ of genus $g$ has at most $4g - 4$ translations.
  It has precisely $4g-4$ translations if and only if 
  $(X,\mu)$ is a normal origami in the stratum $H(1,\ldots, 1)$. 
\item[ii)]
  A finite group $G$ is the automorphism group of a Hurwitz translation surface
  if and only if it can be generated by two elements $a$ and $b$ such 
  that their commutator $[a,b]$ has order 2.
\end{enumerate}
\end{Theo}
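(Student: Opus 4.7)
The plan is to quotient by the translation group $T$ and apply the Riemann--Hurwitz formula. Two preliminary observations drive everything: (a) a nontrivial translation of $\mathbb{R}^2$ has no fixed points, so $T$ acts freely away from the cone singularities, while the stabilizer $T_p$ of a cone point $p$ with cone angle $2\pi(k_p+1)$ has order $e_p$ dividing $k_p+1$ (seen by a local computation in the flat coordinate); (b) because $T$ preserves the holomorphic $1$-form $\omega$, the form descends to $Y = X/T$, and a short local calculation shows that $Y$ is again a translation surface with cone angle $2\pi(k_p+1)/e_p$ at $\bar p = \pi(p)$.

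For part (i), Riemann--Hurwitz reads
\begin{equation*}
2g-2 \;=\; |T|\Bigl[(2g_Y-2)+\sum_{\bar p}(1-1/e_{\bar p})\Bigr],
\end{equation*}
the sum running over the branch orbits. The crucial input is that $Y$ itself is a translation surface: a translation surface of genus $0$ would need $\sum l_{\bar p} = 2g_Y - 2 = -2$ with $l_{\bar p}\ge 0$, which is impossible, so $g_Y \ge 1$. This is exactly what prevents a Hurwitz-style bound of order $84(g-1)$. An elementary case analysis on the bracket then yields bracket $\ge 1/2$, with equality iff $g_Y = 1$ and there is exactly one branch orbit of order $e=2$. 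Hence $|T|\le 4g-4$. In the equality case, $Y$ is a flat torus and in particular has no cone points, so $2\pi = 2\pi(k_p+1)/e_{\bar p}$ forces $k_p+1 = e_{\bar p} = 2$; every cone point of $X$ has $k_p=1$, placing $(X,\mu)$ in $H(1,\dots,1)$, and $X\to Y$ is a normal cover of a torus branched over a single point, i.e.\ a normal origami. Conversely, a normal origami in $H(1,\dots,1)$ tiled by $N$ unit squares has $8$ square-corners meeting at each cone point, hence $V=N/2$; the Euler formula $V-E+F=2-2g$ then gives $N=4g-4$, and since the Galois group acts faithfully by translations, it embeds into $T$ with order matching the upper bound.

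For part (ii) I translate the geometry into group theory using the fundamental group. Identifying $Y$ with a flat torus $E$ and writing $\bar o$ for its branch point, the group $\pi_1(E\setminus\{\bar o\})$ is free on two generators $\alpha,\beta$ whose commutator $[\alpha,\beta]$ represents a small loop around $\bar o$. A Hurwitz translation surface with automorphism group $G$ corresponds to a surjection $\rho\colon \pi_1(E\setminus\{\bar o\}) \twoheadrightarrow G$; setting $a=\rho(\alpha)$, $b=\rho(\beta)$, the ramification condition $e_{\bar o}=2$ from part (i) becomes exactly that $[a,b]=\rho([\alpha,\beta])$ has order $2$. Conversely, given $G=\langle a,b\rangle$ with $[a,b]$ of order $2$, the Galois cover of $E\setminus\{\bar o\}$ defined by $\alpha\mapsto a$, $\beta\mapsto b$ extends to a branched covering of $E$ whose ramification index equals $2$ at each preimage of $\bar o$, producing a translation surface $X$ on which $G$ acts faithfully by translations; Riemann--Hurwitz gives $|G|=4g-4$, and then part~(i) forces $T(X)=G$.

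The main obstacle I expect is the translation-structure bookkeeping in part (i): carefully verifying that $Y$ is genuinely a translation surface so that the genus-$0$ quotient really is excluded (which is what separates the $4g-4$ bound from Hurwitz's $84(g-1)$), and disentangling the equality analysis so that $g_Y=1$, the single branch orbit of ramification $2$, and the stratum $H(1,\dots,1)$ are all forced simultaneously rather than by compensating configurations that happen to match numerically.
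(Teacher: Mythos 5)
Your proposal is correct and follows essentially the same route as the paper: Riemann--Hurwitz applied to $X \to X/\trans(X)$, using that the quotient is again a translation surface and hence has genus at least $1$ to force the bracket to be at least $\tfrac12$, with equality pinning down a normal torus cover branched over one point with ramification index $2$; and part (ii) is the paper's Lemma on lifting the boundary loop, merely rephrased via the monodromy surjection $\pi_1(E\setminus\{\bar o\})\twoheadrightarrow G$ sending $[\alpha,\beta]$ to $[a,b]$. No gaps of substance.
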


We then study in which genus there exist Hurwitz translation surfaces
and obtain the answer to this question in Theorem~\ref{Theo-tHnumbers}.

\begin{Theo}[proven in \Cref{section-tHn}]\label{Theo-tHnumbers}
There exists a Hurwitz translation surface
of genus $g$ if and only if $g$ is odd or $g-1$ is divisible by 3.
\end{Theo}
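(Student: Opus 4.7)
By Theorem~\ref{Theo-criterion}(ii), the existence of a Hurwitz translation surface of genus $g$ is equivalent to the existence of a finite group $G$ of order $4(g-1)$ generated by two elements $a, b$ with $[a,b]$ of order $2$. My plan is to handle the ``if'' direction with two explicit families and the ``only if'' direction by a group-theoretic structure argument.

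For $g$ odd, write $g - 1 = 2k$ and take $G = \Z/(4k) \rtimes \Z/2$, where the $\Z/2$-generator $b$ acts on the $\Z/(4k)$-generator $a$ by $a \mapsto a^{2k+1}$. The relations $(2k+1)^2 \equiv 1 \pmod{4k}$ and $\gcd(2k+1, 4k) = 1$ guarantee a well-defined order-$2$ automorphism of $\Z/(4k)$; a direct computation yields $[a, b] = a^{-2k}$ of order $2$, and $|G| = 8k = 4(g-1)$. For $g - 1 = 3k$, take $G = A_4 \times \Z/k$ with generators $(x, 1)$ and $(y, 0)$, where $x = (1\,2)(3\,4)$ and $y = (1\,2\,3)$. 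Then $[x, y] = (1\,3)(2\,4)$ is an involution in $A_4$, and generation of $A_4 \times \Z/k$ follows since $xyx^{-1}$ is a $3$-cycle whose support differs from that of $y$, so two such $3$-cycles generate $A_4$; adjoining $(x, 1)$ to $A_4 \times \{0\}$ then fills out the full direct product.

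For the converse, suppose $g$ is even with $3 \nmid g-1$, so $|G| = 4m$ with $m$ odd and $\gcd(m,3)=1$, and assume $G$ is $2$-generated with $[a, b]$ of order $2$. First, $G$ must be solvable: a non-abelian simple composition factor would have to be a non-abelian finite simple group of order coprime to $3$ with $2$-Sylow of order dividing $4$, and the classification of finite simple groups excludes all such candidates. Consequently $G' \supsetneq G''$. Because $G$ is $2$-generated, $G'$ is the normal closure of $[a,b]$, so $G'/G''$ is generated by involutions in an abelian group and is therefore elementary abelian, $(\Z/2)^k$ with $k \ge 1$. The divisibility $|G/G''| \mid 4m$ gives $v_2(|G^{ab}|) + k \le 2$, so $k \in \{1,2\}$ and the $2$-rank $r$ of $G^{ab}$ satisfies $r \le 2 - k$.

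The crux is then to observe that $G/G''$ is a \emph{central} extension of $G^{ab}$ by $(\Z/2)^k$: for $k = 1$ this is automatic since $\aut(\Z/2) = 1$; for $k = 2$ the abelian $G^{ab}$ (of odd order, as $r = 0$) acts on $(\Z/2)^2$ through $\mathrm{GL}_2(\mathbb{F}_2) = S_3$, whose only odd-order subgroup is $\Z/3$, ruled out by $3 \nmid |G|$. In such a central extension the commutator descends to a surjective alternating bilinear form $G^{ab} \times G^{ab} \to (\Z/2)^k$, which factors through $\Lambda^2(G^{ab}/2G^{ab}) = \Lambda^2 \mathbb{F}_2^r$, a space of $\mathbb{F}_2$-dimension $\binom{r}{2}$. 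Surjectivity forces $k \le \binom{r}{2}$, and combined with $r \le 2-k$ this is inconsistent in both cases: $k = 1$ demands $r \ge 2$ against $r \le 1$, and $k = 2$ demands $r \ge 3$ against $r = 0$. The main obstacle will be this last alternating-form computation, which is precisely where the coprimality of $g-1$ with $6$ kicks in to force the dimension collapse.
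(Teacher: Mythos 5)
Your proposal is correct, but it takes a genuinely different route from the paper in both directions. On the construction side, the paper builds tH groups of order $2^a$ (as $C_{2^{a-1}}\rtimes C_2$) and of order $4\cdot 3^b$ (as $A_4\times C_{3^{b-1}}$) and then needs a separate lemma on coprime products $G\times C_m$ to reach all remaining admissible orders; your two families $\Z/(4k)\rtimes\Z/2$ of order $8k$ and $A_4\times\Z/k$ of order $12k$ cover every admissible genus in one step, and your generation argument for $A_4\times\Z/k$ (conjugating $(y,0)$ by $(x,1)$ to obtain a second $3$-cycle, then peeling off $(\mathrm{id},1)$) works for all $k$, not only $k$ coprime to $12$. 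On the converse, both arguments first reduce to solvable groups by quoting a classification result (the paper uses Thompson's list of minimal simple groups, you invoke the full classification of finite simple groups --- a heavier but valid hammer), but the mechanisms then diverge: the paper takes a $2'$-Hall subgroup $U$ of index $4$, shows by a Sylow-style count of conjugates (using $3\nmid |G|$ and that any $1+\sum_{p\geq 5}a_pp$ dividing $4$ equals $1$) that $U$ is normal, and concludes $G'\leq U$ has odd order, so $[a,b]$ cannot be an involution. You instead run the derived series: $G'/G''\cong(\Z/2)^k$ with $k\geq 1$ because $G'$ is the normal closure of the involution $[a,b]$, the extension $1\to G'/G''\to G/G''\to G^{\mathrm{ab}}\to 1$ is central (this is exactly where $3\nmid|G|$ enters, excluding a $\Z/3\leq \mathrm{GL}_2(\mathbb{F}_2)$ action in the $k=2$ case), and the induced alternating form on $G^{\mathrm{ab}}\otimes\mathbb{F}_2\cong\mathbb{F}_2^{\,r}$ must have image generating $(\Z/2)^k$, forcing $k\leq\binom{r}{2}$, which is incompatible with $r+k\leq v_2(|G|)=2$. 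Your version trades Hall's theorem for a commutator-form dimension count and is arguably more self-contained past the solvability step. One small imprecision: the commutator form need not be \emph{surjective}; what the argument uses (and what is true) is that its image generates $[G/G'',G/G'']=(\Z/2)^k$, which still forces the induced linear map on $\Lambda^2\mathbb{F}_2^{\,r}$ to be onto and yields the same bound $k\leq\binom{r}{2}$.
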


{\bf Acknowledgement:}
The second author would like to thank the F{\'e}d{\'e}ration de recherche des Unit{\'e}s de Math{\'e}matiques de Marseille
for its hospitality and the stimulating working environment and 
the Institute for Computational and Experimental Research in Mathematics (ICERM) for the wonderful and intensive
semester program {\em Low-dimensional Topology, Geometry, and Dynamics 2013} during which part of this project was accomplished.

\section{Basics}\label{basics}

In this section we give the basic definitions on translation surfaces and origamis which we will use. 
More comprehensive introductions can be found e.g. in \cite{HeSc}, \cite{HS1}, \cite{Sc1} and \cite{Zorich}. 
A {\em translations surface} $(X^*,\mu)$ is a surface together with
an atlas $\mu$ all of whose transition maps are translations. In this article
we restrict ourselves to {\em finite translation surfaces} $(X,\mu)$, i.e. $X$ is a closed
surface, $\mu$ is a translation atlas on $X^* = X\backslash\{P_1,\ldots,P_n\}$
where $P_1$, \ldots, $P_n$ are finitely many points of $X$, and all 
of the $P_i$'s are finite cone angle singularities of some cone angle $k_i\cdot 2\pi$.  
Such translation surfaces can always be obtained by the following
handy construction: Take finitely many polygons in the Euclidean plane $\RR^2$
such that their edges come in pairs which are
parallel and have the same length. Identify for each pair these edges via a translation such
that you obtain a connected oriented surface $X$. By construction $X$ carries the structure
of a finite translation surface possibly having singularities in the points which 
come from the vertices of the polygons.

\begin{center}
\newdimen\R
\R=1cm
\begin{figure}[h]
\begin{tikzpicture}
    \draw (0:\R) \foreach \x in {72,144,...,359} {
            -- (\x:\R)
        } -- cycle (90:\R) node[above] {} ;
    \draw[xshift=1.5\R,yshift=1.03\R] (36:\R) \foreach \x in {36,108,180,252,324} {
            --  (\x:\R) 
        } --  cycle (90:\R) node[above] {} ;
    \node at (108:\R){\small a};  \node at (180:\R){\small b};  \node at (252:\R){\small c};  \node at (324:\R){\small d};
    \draw[xshift=1.5\R,yshift=1.03\R] node at (0:\R){\small b}  node at (72:\R){\small c} node at (144:\R){\small d} node at (-72:\R){\small a}; 
\end{tikzpicture}
\caption{Translation surface obtained by gluing parallel edges of two regular pentagons}\label{dbpentagon}
\end{figure}
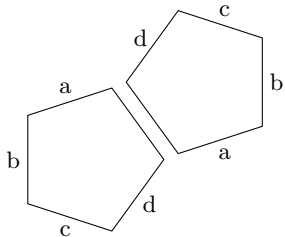
\end{center}

\Cref{dbpentagon} shows the translation surfaces built from two regular pentagons. It belongs to the famous double regular $n$-gon series 
which was already studied by Veech in \cite{Veech3}. In the case of the double pentagon all vertices of the two polygons glue to a
single point of the surface. Thus the Euler characteristic is $\chi = 1-5+2 = -2$ and the genus is $2$.  The cone angle around the
one singular point obtained from the 10 vertices is $6\pi$, which is the sum of all interior angles of the two pentagons.\\

The set of all finite translation surfaces of genus $g$ forms a space called $\Omega\MMM_g$, see e.g. \cite{KontsevichZorich}. 
This space is stratified 
by the data of the 
angles of the singularities. More precisely the stratum $\Omega\MMM_g(k_1,\ldots,k_n)$ consists
of the finite translation surfaces of genus $g$ with $n$ singularities of angle $(k_1+1)\cdot 2\pi$, \ldots, $(k_n+1)\cdot 2\pi$.  
An easy Euler characteristic calculation shows that $k_1 + \ldots + k_n = 2g-2$. It follows in particular
that the genus of a translation surface is at least 1.\\

A very special class of translation surfaces are origamis (also called
{\em square-tiled surfaces}). There are several equivalent definitions for origamis, see e.g. \cite[Section 1]{Sc2}.
We will use the following ones.

\begin{defn}\label{def-origami}
  An {\em origami} $O$ is equivalently given by one of the three following objects:
  \begin{enumerate}
  \item\label{fstdef}
    A translation surface $(X,\mu)$  obtained by taking finitely many 
    copies of the Euclidean unit squares and gluing their edges via translations, up to equivalence.
    Two such surfaces $(X_1,\mu_1)$ and $(X_2,\mu_2)$  are equivalent, if there exists a 
    homeomorphism $f:X_1 \to X_2$
    which is a translation with respect to the charts in $\mu_1$ and $\mu_2$.
  \item\label{snddef}
    A finite degree cover $p:X \to E$ of the torus $E$ which 
    is ramified at most over one point, up to equivalence. Two such covers $p_1:X_1 \to E_1$
    and $p_2: X_2 \to E_2$ are equivalent, if there exists a homeomorphism $h: X_1 \to X_2$
    with $p_2 \circ h = p_1$.
  \item\label{thrddef}
    A pair of permutations $(\sigma_a,\sigma_b)$
    in the symmetric group $S_d$ with some $d \in \NN$ up to equivalence by
    simultaneous conjugation.
  \end{enumerate}
\end{defn} 

The equivalence of these definitions is e.g. described in \cite[Section 1]{Sc2}.
We here just explain them for the example shown in \Cref{fig-wms}.
The number $d$ in (\ref{thrddef}) of \Cref{def-origami} is the number of the squares in 
(\ref{fstdef})
and the degree of the covering in (\ref{snddef}).\\

\begin{figure}[h]
\begin{center}
\hspace*{1mm}
    \begin{xy}
      <1.3cm,0cm>:
      (0,1)*{\OriSquare{1}{}{}{4}{6}};
      (1,1)*{\OriSquare{2}{}{7}{}{}};
      (2,1)*{\OriSquare{3}{}{}{}{8}};
      (3,1)*{\OriSquare{4}{1}{5}{}{}};
      (1,0)*{\OriSquare{5}{6}{}{8}{4}};
      (2,2)*{\OriSquare{6}{7}{1}{5}{}};
      (3,0)*{\OriSquare{7}{8}{}{6}{2}};
      (0,2)*{\OriSquare{8}{5}{3}{7}{}};
      (-0.5,0.52)*{{\color{red}\bullet}};
      (3.5,0.52)*{{\color{red}\bullet}};
      (-0.5,2.52)*{{\color{red}\bullet}};
      (1.5,.52)*{{\color{red}\bullet}};
      (1.5,2.52)*{{\color{red}\bullet}};
      (0.5,0.52)*{{\color{green}\bullet}};
      (2.5,2.52)*{{\color{green}\bullet}};
      (2.5,0.52)*{{\color{green}\bullet}};
      (.5,2.52)*{{\color{green}\bullet}};
      (-0.5,1.52)*{{\color{blue}\bullet}};
      (3.5,1.52)*{{\color{blue}\bullet}};
      (1.5,-.48)*{{\color{blue}\bullet}};
      (1.5,1.52)*{{\color{blue}\bullet}};
      (3.5,-.48)*{{\color{blue}\bullet}};
      (0.5,1.52)*{{\color{brown}\bullet}};
      (2.5,-.48)*{{\color{brown}\bullet}};
      (2.5,1.52)*{{\color{brown}\bullet}};
      (.5,-.48)*{{\color{brown}\bullet}};
    \end{xy}
 
    \caption{A normal origami of genus 3 called the {\em Eierlegende Wollmilchsau}}\label{fig-wms}
 \end{center} 
\end{figure}
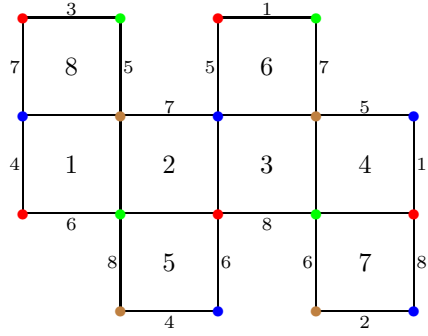

\Cref{fig-wms} shows an origami using the first version in \Cref{def-origami}.
A right edge labelled by the number $a$ is glued to the left edge of the square labelled by $a$
and similarly for the upper and the lower edges.
Using the Euler characteristic formula one easily calculates that it is of genus 3. 
You directly see the corresponding covering $p$ from this picture: Present the torus $E$
as the surface obtained by gluing opposite edge of the Euclidean unit square. Then the cover $p$
maps each square in \Cref{fig-wms} to the one square forming the torus. This gives
a well-defined cover of the corresponding surface to $E$ which is ramified
only over the one point obtained from the four vertices of the unit square. Also the
pair of permutations in the third description of \Cref{def-origami}
can be easily read off from the picture. $\sigma_a$ is the permutation which maps
the number of a square to the number of its right neighbour; the permutation
$\sigma_b$ maps it to the number of its upper neighbour. Thus we obtain
 \begin{equation}
   \sigma_a = (1,2,3,4)(5,6,7,8), \quad \sigma_b = (1,8,3,6)(2,7,4,5). \label{ex-wms}
 \end{equation}
This origami is called {\em Eierlegende Wollmilchsau} (see \cite{HeSc2008})
since it has several nice properties and has served as counter examples in several occasions, see e.g. 
\cite{Forni2006}, \cite{MatheusYoccoz2010} and \cite{Moeller2011}.\\

A special class of origamis are {\em normal origamis}. They are by definition
origamis $p:X \to E$ for which 
the covering $p$ is normal, i.e. it is the quotient by a group $G$. In this case the group $G$ is
finite, its order is the degree $d$ of the covering and it is the group $\gal(X/E)$
of deck transformations of the covering $p$, i.e. homeomorphisms $h$ with $p\circ h = p$. 
In particular the group $G$ acts 
transitively on the fibre of a point of the torus $E$. We may use this to label the squares by the elements
in $G$: Label one square $Sq$ by $\id$ and label the square $g\cdot Sq$  by  $g$. In particular, if the right 
neighbour of Square $\id$
is labelled by $a$ and the left neighbour by $b$, then the dual graph of the origami
is the Cayley graph of $G$ with respect to the generators $a$ and $b$. Here the dual graph of the origami
is the following graph. The vertices of the graph correspond to the squares of the origami
and the edges of the graph are labelled by $a$ and $b$: two vertices in the graph are
connected by an $a$-edge, if the corresponding squares intersect in a vertical edge,
and similarly for the $b$-edges of the graph and the horizontal edges of the origami.

\begin{ex}\label{ex-wms-normal}
Observe that the origami shown in \Cref{fig-wms} is normal. More precisely the group $G$
of deck transformations is the quaternion group 
\[Q = \{\pm i, \pm j, \pm k, \pm 1\} = \; <i,j,-1| k = ij = -ji, i^2 = j^2 = k^2 = -1, (-1)^2 = 1>\]
\hspace*{1.3cm}

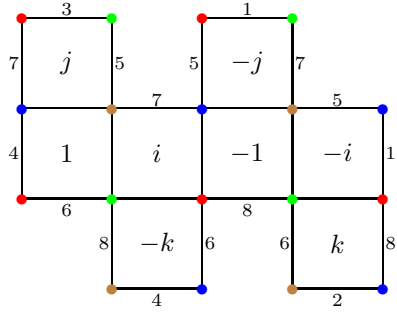
\begin{figure}[h]
\hspace*{1mm}
\begin{xy}
<1.2cm,0cm>:
(0,1)*{\OriSquare{1}{}{}{4}{6}};
(1,1)*{\OriSquare{i}{}{7}{}{}};
(2,1)*{\OriSquare{-1}{}{}{}{8}};
(3,1)*{\OriSquare{-i}{1}{5}{}{}};
(1,0)*{\OriSquare{-k}{6}{}{8}{4}};
(2,2)*{\OriSquare{-j}{7}{1}{5}{}};
(3,0)*{\OriSquare{k}{8}{}{6}{2}};
(0,2)*{\OriSquare{j}{5}{3}{7}{}};
(-0.5,0.52)*{{\color{red}\bullet}};
(3.5,0.52)*{{\color{red}\bullet}};
(-0.5,2.52)*{{\color{red}\bullet}};
(1.5,.52)*{{\color{red}\bullet}};
(1.5,2.52)*{{\color{red}\bullet}};
(0.5,0.52)*{{\color{green}\bullet}};
(2.5,2.52)*{{\color{green}\bullet}};
(2.5,0.52)*{{\color{green}\bullet}};
(.5,2.52)*{{\color{green}\bullet}};
(-0.5,1.52)*{{\color{blue}\bullet}};
(3.5,1.52)*{{\color{blue}\bullet}};
(1.5,-.48)*{{\color{blue}\bullet}};
(1.5,1.52)*{{\color{blue}\bullet}};
(3.5,-.48)*{{\color{blue}\bullet}};
(0.5,1.52)*{{\color{brown}\bullet}};
(2.5,-.48)*{{\color{brown}\bullet}};
(2.5,1.52)*{{\color{brown}\bullet}};
(.5,-.48)*{{\color{brown}\bullet}};
\end{xy}
\caption{Squares of the translation surface from \Cref{fig-wms}
         can be labelled by the elements in the quaternion group $Q$.}
\end{figure}
\end{ex}

We finish this section by giving a formal definition of the
main actor of this article.

\begin{defn}
Let $(X,\mu)$ be a finite translation surface.
The {\em group of translations} $\emtrans(X,\mu)$ is the group of the homeomorphisms
which are translations with respect to $\mu$,  i.e. with respect to the charts of $\mu$ they
have the form ${x\choose y} \mapsto {x\choose y} + {c_1\choose c_2}$ for some constant vector 
${c_1\choose c_2}$ depending on the charts.
\end{defn}

\section{Hurwitz translation surfaces}\label{section-tHs}

In this section we show that a translation surface of genus $g$ can have at most $4g-4$
automorphisms. We call a translation surface {\em Hurwitz translation surface}, if it achieves this
upper bound. We prove the criterion in \Cref{Theo-criterion} 
for translation surfaces to be Hurwitz translation surface and give first 
examples for Hurwitz translation surfaces.

\begin{lem}\label{lemma-Htrans}
Let $X$ be a precompact translation surface of genus $g \geq 2$. $X$ has
at most $c(g) = 4g -4$ translations. If $X$ has
$4g-4$ translations, then it lies in the stratum $H(1,\ldots, 1)$  
and is an origami.
\end{lem}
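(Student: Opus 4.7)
My plan is to analyze the quotient $Y = X/G$, where $G = \trans(X,\mu)$, via Riemann--Hurwitz, leveraging the extra rigidity coming from the translation structure. First I would note that $G$ acts by biholomorphisms preserving the holomorphic $1$-form $\omega$ associated with $\mu$. A translation fixing a regular point is the identity when read in any chart, so the action is free on $X^*$, while at a zero $P$ of $\omega$ of order $k$ the stabilizer $G_P$ is cyclic of some order $e$ with $e \mid k+1$: in a linearizing coordinate where $\omega = z^k\,dz$, any element of $G_P$ is of the form $z\mapsto\zeta z$, and $\omega$-invariance gives $\zeta^{k+1}=1$.

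The decisive step is that, because $G$ acts by translations, the form $\omega$ descends to a nonzero holomorphic $1$-form $\omega_Y$ on $Y$ (a short local computation around each branch point shows that the descended form extends holomorphically, with order of vanishing $(k+1)/e - 1$ at a branch image $Q$). Consequently $g_Y \geq 1$. This is the input that separates the argument from the classical Hurwitz analysis, which otherwise would leave $g_Y = 0$ open and only yield the much weaker bound $84(g-1)$.

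With this in hand I would apply Riemann--Hurwitz to $\pi : X \to Y$:
\[
2g - 2 \;=\; |G|\left[(2g_Y - 2) + \sum_j (1 - 1/e_j)\right],
\]
the sum running over the ramified $G$-orbits of singularities, with stabilizer orders $e_j \geq 2$. For $g_Y \geq 2$ the bracket is at least $2$, giving $|G| \leq g-1$. For $g_Y = 1$ the bracket equals $\sum_j (1-1/e_j) > 0$ and is minimized at $1/2$ by a single orbit with $e=2$, yielding the bound $|G| \leq 4g-4$.

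It remains to pin down the extremal case. With $g_Y=1$ and a single ramified orbit of stabilizer order $2$, the divisibility $2 \mid K+1$ combined with the angle relation $\sum_j (|G|/e_j)\,K_j = 2g-2$ (one term, $|G|=4g-4$, $e=2$) forces $K=1$ at that orbit, so every singularity of $X$ has cone angle $4\pi$ and $X$ lies in $H(1,\dots,1)$. The branch image in $Y$ has cone angle $(K+1)\cdot 2\pi/e = 2\pi$, so $Y$ is an honest flat torus and $\pi:X\to Y$ is a cover of the torus branched over at most one point, which is precisely an origami by the second characterisation in \Cref{def-origami}. I expect the descent step in the second paragraph to be the main subtlety: once $g_Y \geq 1$ is secured, the rest reduces to Riemann--Hurwitz bookkeeping.
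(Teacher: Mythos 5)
Your proposal is correct and follows essentially the same route as the paper: pass to the quotient $X/G$, observe that it inherits a translation structure (so has genus at least $1$), apply Riemann--Hurwitz, and extract the extremal case $g_{X/G}=1$ with a single branch orbit of ramification index $2$, which forces the stratum $H(1,\ldots,1)$ and the origami structure. The extra details you supply (cyclic stabilizers dividing $k+1$, the local descent of $\omega$) are a more explicit justification of the paper's one-line assertion that $X/G$ is again a precompact translation surface, and your form of Riemann--Hurwitz with $\sum_j(1-1/e_j)$ is equivalent to the paper's $\sum_i s_i(e_i-1)$.
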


\begin{proof}
Let $X$ be a precompact translation surface of genus 
$g \geq 2$, $G = \trans(X)$ its group
of translations and $d$ the order of $G$. 
Consider the ramified covering $p:X \to X/G$. 
$X/G$ is again a precompact translation surface, hence
in particular its genus $g_{X/G}$ is greater or equal to $1$.
Let $P_1$, \ldots, $P_k$ on $X/G$ 
be the ramification points of $p$. Since $p$ is normal,
all preimages of $P_i$ have the same ramification
index $e_i$. Let $s_i$ be the number of preimages
of $P_i$, thus we have $d = s_ie_i$ and $s_i \leq \frac{d}{2}$. 
By Riemann-Hurwitz we have
for the genus $g$ and $g_{X/G}$ of $X$ and $X/G$, respectively:
\begin{equation}\label{RHformula}
\begin{array}{lcl}
2g-2 &=& d(2g_{X/G}-2) + \sum_{i=1}^k s_i(e_i-1)
     =  d(2g_{X/G}-2) + kd - \sum_{i = 1}^ks_i\\
     &\geq& d(2g_{X/G}-2)  +kd - k\cdot \frac{d}{2} = d(2g_{X/G}-2+\frac{k}{2})
\end{array}
\end{equation}
Observe that since $g_{X/G} \geq 1$, we have that
 $2g_{X/G}-2+\frac{k}{2} = 0$ if and only if
$g_{X/G} = 1$ and $k = 0$. In this case we obtain from 
(\ref{RHformula}) that $g = 1$ which contradicts our assumption.
Hence we have
\[ d(2g_{X/G}-2+\frac{k}{2}) \leq 2g-2\]
with $ 2g_{X/G}-2+\frac{k}{2} \geq \frac{1}{2}$ and thus
$d \leq 4g-4$. 
Equality holds if and only if $ 2g_{X/G}-2+\frac{k}{2}= \frac12$
and $s_i = \frac{d}{2}$ for all $i$, i.e. if $g_{X/G} = 1$, $k = 1$ and
$s_1 = \frac{d}{2}$. Thus $p$ is a covering of the torus
ramified over one point and the ramification index 
of the preimages are all $e_i = 2$. In particular $X$ is an origami
in the stratum $H(1,\ldots, 1)$.
\end{proof}

\begin{defn}
We say that a translation surface $X$ {\em is a Hurwitz translation surface (Hts)},
if $X$ has $4g-4$ translations. 
\end{defn}

\begin{cor}
It directly follows from the proof of \Cref{lemma-Htrans} that a translation surface
is a Hts if and only if it is a normal origami in the stratum $H(1,\ldots,1)$.
\end{cor}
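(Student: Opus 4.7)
The plan is to obtain one implication directly from the proof of Lemma~\ref{lemma-Htrans} and to verify the converse by a brief Riemann--Hurwitz count. First I would observe that the argument in the proof of Lemma~\ref{lemma-Htrans} yields the forward direction almost verbatim: if $X$ has $4g-4$ translations, then setting $G=\trans(X)$, the equality case of that proof forces the quotient map $p\colon X\to X/G$ to be a covering of a torus ramified over a single point with ramification index $2$ at every preimage, and places $X$ in $H(1,\ldots,1)$. Since $p$ is by construction a quotient by a group action, it is normal, so $X$ is a normal origami in $H(1,\ldots,1)$.

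For the converse, suppose $X$ is a normal origami $p\colon X\to E$ in $H(1,\ldots,1)$, and let $H=\gal(X/E)$ with $d=|H|$ the degree of the covering. I would first note that $H\subseteq \trans(X)$: the translation atlas on $X$ is the pullback along $p$ of the one on $E$, so any deck transformation reads as a translation in these charts. Next I would count: an element of $H(1,\ldots,1)$ has exactly $2g-2$ singularities, each of cone angle $4\pi$, so the ramification index above the unique branch point of $E$ equals $2$ at every preimage; hence $d=s\cdot e=(2g-2)\cdot 2=4g-4$. Combined with the upper bound from Lemma~\ref{lemma-Htrans}, this gives
$4g-4=|H|\leq |\trans(X)|\leq 4g-4$,
equality holds throughout, and $X$ is an Hts.

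No step looks difficult: the only point deserving explicit mention is the identification of deck transformations of a normal origami with translations of $X$, which is immediate from the construction of the translation structure on $X$ as a pullback via $p$. Everything else is bookkeeping using Riemann--Hurwitz and the definition of the principal stratum.
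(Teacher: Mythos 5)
Your proof is correct and matches the argument the paper intends: the forward direction is exactly the equality case of the Riemann--Hurwitz computation in the proof of Lemma~\ref{lemma-Htrans} (with normality coming for free since $p$ is a quotient map), and the converse is the natural count $|\gal(X/E)| = 2\cdot(2g-2) = 4g-4$ combined with the observation that deck transformations of an origami are translations. The paper leaves these details implicit, so your write-up simply fills in what ``directly follows'' means; no gap.
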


\begin{ex}\label{Hts-examples}
  The following origamis are Hurwitz translation surfaces:
  \begin{enumerate}
  \item
    The {\em Eierlegende Wollmilchsau} from \Cref{ex-wms-normal} is a Hurwitz translation surface.
   \item
    The {\em Escalator with $8$ squares} (see Figure~\ref{fig-elevator}) defined by the permutations:
    \[\sigma_a = (1,2)(3,4)(5,6)(7,8), \quad \sigma_b = (2,3)(4,5)(6,7)(8,1)\]
    The automorphism group is the dihedral group 
    \[D_4 = <\tau_1,\tau_2; \tau_1^2,\tau_2^2, (\tau_1\tau_2)^4>\] 
    of 8 elements. Going to the right corresponds to multiplication by $\tau_1$,
    going up corresponds to multiplication by $\tau_2$. The origami has four singularities 
    of total angle $4\pi$ and is thus of genus 3.
    \begin{center}    
      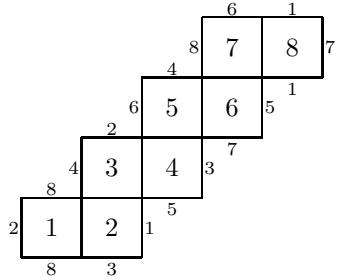
\begin{figure}[h]
        \hspace*{2mm}
        \begin{xy}
          <.8cm,0cm>:
          (0,0)*{\OriSquare{1}{}{8}{2}{8}};
          (1,0)*{\OriSquare{2}{1}{}{}{3}};
          (1,1)*{\OriSquare{3}{}{2}{4}{}};
          (2,1)*{\OriSquare{4}{3}{}{}{5}};
          (2,2)*{\OriSquare{5}{}{4}{6}{}};
          (3,2)*{\OriSquare{6}{5}{}{}{7}};
          (3,3)*{\OriSquare{7}{}{6}{8}{}};
          (4,3)*{\OriSquare{8}{7}{1}{}{1}};
        \end{xy}
        \caption{The Escalator: a normal origami of genus 3 with 8 translations}\label{fig-elevator}
    \end{figure}
    \end{center}
  \item\label{A4}
    The origami given by the following two permutations, see Figure~\ref{A4-origami}:
    \begin{align*}
      \sigma_a &= (1,5,7)(2,4,8)(11,12,10)(3,6,9),\\
      \sigma_b &= (1,4)(2,6)(3,5)(7,11)(8,10)(9,12).
    \end{align*}
    The origami is of genus 4. 
    Its automorphism group is the alternating group $A_4$. The squares correspond to the elements in $A_4$
    as follows: 
    \begin{align*}
     1 &\leftrightarrow \id, 2  \leftrightarrow (1,4,3), 3  \leftrightarrow (1,3,4), 4 \leftrightarrow (1,2)(3,4),\\
     5 &\leftrightarrow (1,2,3), 6  \leftrightarrow (1,2,4), 7  \leftrightarrow (1,3,2), 8  \leftrightarrow (2,4,3)\\
     9 &\leftrightarrow (1,4)(2,3), 10 \leftrightarrow (1,4,2), 11 \leftrightarrow (2,3,4), 12 \leftrightarrow (1,3)(2,4)
    \end{align*}
    \begin{center}    
      \begin{figure}[h]
        \hspace*{2mm}
        \begin{xy}
          <.8cm,0cm>:
          (0,1)*{\OriSquare{1}{}{4}{7}{4}};
          (5,1)*{\OriSquare{2}{}{6}{}{}};
          (1,0)*{\OriSquare{3}{6}{}{9}{5}};
          (6,1)*{\OriSquare{4}{8}{1}{}{1}};
          (1,1)*{\OriSquare{5}{}{3}{}{}};
          (5,0)*{\OriSquare{6}{9}{}{3}{2}};
          (2,1)*{\OriSquare{7}{1}{}{}{11}};
          (4,1)*{\OriSquare{8}{}{}{4}{10}};
          (3,3)*{\OriSquare{9}{3}{12}{6}{}};
          (4,2)*{\OriSquare{10}{11}{8}{}{}};
          (2,2)*{\OriSquare{11}{}{7}{10}{}};
          (3,2)*{\OriSquare{12}{}{}{}{9}};
        \end{xy}
        \caption{A normal origami in genus 4 whose deck group is the alternating group $A_4$} \label{A4-origami}
      \end{figure}
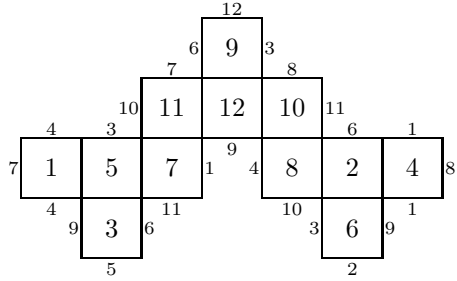
    \end{center}
  \end{enumerate}
\end{ex}

\begin{lem}\label{thegroup}
Let $G$ be a finite group of order $d$ which is generated by two elements
$a$ and $b$. Suppose that the commutator $[a,b]$ has order 2.
$G$ acts on itself by multiplication from the right.
Identify the elements of $G$ with the numbers of $\{1,\ldots, d\}$. 
Then each element $g$ of $G$ defines a permutation $\sigma_g$ in $S_d$.
Let $O$ be the origami defined by the pair of permutations $(\sigma_a,\sigma_b)$.
Then $O$ is a Hurwitz translation surface and 
we obtain any Hurwitz translation surface in this way. 
\end{lem}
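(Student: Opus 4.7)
The plan is to combine the corollary to \Cref{lemma-Htrans}---which identifies Hurwitz translation surfaces with normal origamis in the stratum $H(1,\ldots,1)$---with the standard monodromy dictionary for origamis. The key input is that a normal origami $p:X\to E$ with deck group $G$ corresponds to a surjection $\pi_1(E\setminus\{P_0\}) = \langle x,y\rangle \twoheadrightarrow G$, and that the ramification index at each preimage of $P_0$ equals the order of the image of the small loop $[x,y]$ encircling $P_0$.

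For the construction direction, assume $G = \langle a,b\rangle$ with $[a,b]$ of order $2$ and let $O$ be the origami defined by $(\sigma_a,\sigma_b)$, where $\sigma_g$ is right multiplication by $g$. First I would observe that $\langle\sigma_a,\sigma_b\rangle$ acts transitively on $G\cong\{1,\ldots,d\}$ because $a,b$ generate $G$, so $O$ is a connected origami of degree $d$. The left multiplication action of $G$ on itself commutes with right multiplications and is free and transitive on squares; it therefore realises $G$ as the deck group of the associated cover $p:X\to E$, so $O$ is normal. Under the monodromy $x\mapsto a,\ y\mapsto b$, the loop around $P_0$ maps to $[a,b]$, whose order is $2$ by hypothesis; hence the ramification index at every preimage of $P_0$ equals $2$, and $O$ lies in $H(1,\ldots,1)$. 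The corollary to \Cref{lemma-Htrans} then yields that $O$ is a Hurwitz translation surface.

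For the converse, let $O$ be any Hurwitz translation surface. By the corollary, $O$ is a normal origami $p:X\to E$ in $H(1,\ldots,1)$ with some finite deck group $G$. Labelling the squares by elements of $G$ as explained before \Cref{ex-wms-normal}, let $a$ and $b$ denote the labels of the right and upper neighbours of the square labelled by $\id$. The Cayley-graph description of the dual graph together with connectedness of $O$ forces $\{a,b\}$ to generate $G$, and the pair of permutations associated to $O$ coincides with $(\sigma_a,\sigma_b)$ up to simultaneous conjugation. The monodromy computation above shows that the ramification index of $p$ equals $\mathrm{ord}([a,b])$, and the stratum condition $H(1,\ldots,1)$ forces this order to be $2$.

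The only non-routine ingredient is the identification of the ramification index with the order of the image of $[x,y]$; everything else reduces to unwinding the three equivalent descriptions of origamis in \Cref{def-origami} and the discussion of labellings preceding \Cref{ex-wms-normal}. Once this ingredient is in place, both directions of the lemma become fairly mechanical, and the lemma is essentially a translation of the stratum condition into the language of generators and relations of $G$.
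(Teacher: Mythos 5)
Your proposal is correct and follows essentially the same route as the paper: both reduce to the corollary of \Cref{lemma-Htrans} characterising Hurwitz translation surfaces as normal origamis in $H(1,\ldots,1)$, and both identify the ramification index over the puncture with the order of (the permutation induced by) the commutator $[a,b]$, the paper doing this by explicitly lifting the loop around the puncture to the square labelled $g$ and landing in the square labelled $g\cdot aba^{-1}b^{-1}$. Your additional remarks on connectedness via transitivity and on left multiplication realising the deck group are details the paper leaves implicit, but the argument is the same.
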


\begin{proof}
The squares of the origami $O$ correspond to the elements of $G$.
The right neighbour of the square labelled by $g$ is $g\cdot a$ 
and the upper neighbour is $g\cdot b$.
Let $p:X \to E$ be the corresponding covering to the torus.
Consider a small simple closed loop on the torus around the ramification point which goes first to the right, then up,
then to the left and then down. 
If we lift this loop to $X$ via $p$ in a point which lies let us say in 
the square labelled by $g \in G$, then the lift ends in a point which lies in the square
labelled by  
$g\cdot a\cdot b\cdot a^{-1}\cdot b^{-1}$. Hence lifting the second power of our simple 
closed loop closes up.
This is true for any $g$, thus the ramification
index of each point above the puncture of the torus is $2$ and hence all
singularities of $O$ are of total angle $4\pi$. Thus $O$ is 
a normal origami in $H(1,\ldots, 1)$. Conversely, if we start with a normal
origami in $H(1,\ldots,1)$, its group of 
translations which is equal to the deck transformation group has the desired 
property. Hence the origami defines a Hurwitz translation surface.
\end{proof}

\Cref{Theo-criterion} now directly follows from \Cref{lemma-Htrans} and \Cref{thegroup}.
Furthermore, we obtain the following non-example of \Cref{thegroup}.

\begin{rem}\label{non-example}
There are no Hurwitz translation surfaces of genus 2.
\end{rem}

\begin{proof}
Suppose there was a normal origami $X$ in $H(1,1)$.
Then $G = \trans(X)$ has order $4\cdot 2 - 4 = 4$
and is hence abelian. But then the commutator
of two elements is trivial which contradicts \Cref{thegroup}. 
\end{proof}

\section{Translation Hurwitz Numbers}\label{section-tHn}

We have seen in \Cref{Hts-examples} Hurwitz translation surfaces 
in genus 3 and genus 4 and in \Cref{non-example} that there are none in genus 2.
This naturally leads to the question in which genus there exist translation Hurwitz surfaces. This 
becomes via \Cref{thegroup} a purely group theoretical question.
This section is devoted to 
the proof of  \Cref{Theo-tHnumbers}, which answers to this question.\\

We denote the cyclic group with $n$ elements by $C_n$, the 
commutator subgroup of a group $G$ by $G'$, the normaliser of a subgroup $U$ of $G$ by $N_G(U)$
and the index of $U$ by $(G:U)$.

\begin{defn}
A finite group $G$ is called {\em tH (translation Hurwitz)}, 
if there exist two elements $g, h\in G$ such that $\langle g, h\rangle=G$, and $[g, h]$ has order 2. 
An integer $n$ is called {\em tH}, if there exists a tH group of order $n$.
\end{defn}

\begin{prop}\label{tH}
An integer is tH if and only if $n$ is divisible by 8 or 12.
\end{prop}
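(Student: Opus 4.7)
My plan is to prove both directions via a Sylow-theoretic analysis of the Sylow $2$-subgroup of a tH group, using Burnside's transfer theorem and the focal subgroup theorem.

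For the \emph{only if} direction, suppose $G$ is tH of order $n$ with generators $a, b$ and $z = [a,b]$ of order $2$; the crucial observation is that $z \in G'$ and is killed by any homomorphism from $G$ to an abelian group. If the Sylow $2$-subgroup $P$ of $G$ is cyclic, then $\operatorname{Aut}(P)$ is a $2$-group while $|N_G(P)/C_G(P)|$ is odd, so $N_G(P) = C_G(P)$; by Burnside's normal $p$-complement theorem $G$ has a normal $2$-complement $H$, and the quotient $G/H$ is an abelian $2$-group, forcing $z \in H$ to have odd order, a contradiction. Thus either $|P| \geq 8$ (yielding $8 \mid n$) or $P = V_4$.

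Assume $P = V_4$. The focal subgroup theorem, together with the classical fact that fusion in an abelian Sylow subgroup is controlled by its normaliser, shows that $P \cap G'$ is determined by the action of $N_G(P)/C_G(P) \leq \operatorname{Aut}(V_4) = S_3$ on the three involutions of $P$. If this action is transitive, then $3$ divides $|N_G(P)/C_G(P)|$, hence $3 \mid |G|$, and combined with $4 \mid n$ we conclude $12 \mid n$. Otherwise $P \cap G' \subsetneq P$. If $P \cap G' = \{1\}$, then by Sylow conjugacy and normality of $G'$ every Sylow $2$-subgroup $P'$ satisfies $P' \cap G' = \{1\}$, contradicting that the involution $z \in G'$ lies in some Sylow $2$-subgroup. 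If $|P \cap G'| = 2$, then the $2$-part of $G/G'$ equals $P/(P \cap G') = C_2$, so the surjection $G \twoheadrightarrow C_2$ has kernel $H$ of order $2m$ ($m$ odd) with cyclic Sylow $2$-subgroup; Burnside's theorem applied to $H$ produces $K = O_{2'}(H)$, characteristic in $H$ and therefore normal in $G$, with $|G/K| = 4$ and $P \cap K = \{1\}$, giving the decomposition $G = K \rtimes V_4$. Since $G/K \cong V_4$ is abelian, this again forces $z \in K$ of odd order, a contradiction. Combining all cases, $n$ must be divisible by $8$ or $12$.

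For the \emph{if} direction I exhibit explicit tH groups. For $n = 8k$ with $k \geq 1$, take
\[
G_k = \langle a, b \mid a^{4k} = b^2 = 1,\ bab^{-1} = a^{2k+1} \rangle.
\]
Because $(2k+1)^2 = 4k(k+1) + 1 \equiv 1 \pmod{4k}$, this is a genuine semidirect product $C_{4k} \rtimes C_2$ of order $8k$, and $[a, b] = a \cdot (b a^{-1} b^{-1}) = a^{-2k}$ has order $2$ in $C_{4k}$; for $k = 1, 2$ one recovers $D_4$ and the modular-maximal-cyclic group $M_4(2)$. For $n = 12k$ with $k \geq 1$, take $A_4 \times C_k$ with the two generators $\bigl((1\,2\,3),\,1\bigr)$ and $\bigl((1\,2)(3\,4),\,c\bigr)$, where $c$ generates $C_k$: their commutator is $\bigl((1\,3)(2\,4),\,1\bigr)$ of order $2$. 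For $k$ odd, the $A_4$-components of the two generators are $(1\,2\,3)$ and $(1\,2)(3\,4)$, which generate $A_4$; for $k$ even, the key observation is that $(1\,2\,3)$ together with the commutator $(1\,3)(2\,4)$ still generate $A_4$ (conjugation yields $(1\,2)(3\,4)$), and one then recovers $(1, c)$ in the subgroup and hence the whole product. The subtlest step of the entire proof is the focal-subgroup dichotomy for $P = V_4$; the key inputs are the focal subgroup theorem, normaliser-control of fusion in an abelian Sylow subgroup, and identifying $O_{2'}(H)$ as the characteristic Hall $2'$-subgroup that enables the decomposition $G = K \rtimes V_4$.
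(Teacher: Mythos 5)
Your proof is correct, and it takes a genuinely different route from the paper's on both directions. For the \emph{only if} direction the paper first invokes Thompson's classification of minimal finite simple groups to show that any group whose order is not divisible by $8$ or $12$ is solvable, and then works with $2'$-Hall subgroups (plus a Sylow-style counting argument) to produce a normal subgroup of index $2$ or $4$ containing $G'$; your argument instead analyses the Sylow $2$-subgroup directly via Burnside's normal $p$-complement theorem, Burnside's fusion theorem for abelian Sylow subgroups, and the focal subgroup theorem. This buys a completely classification-free and self-contained proof (only classical transfer theory), at the price of a slightly more delicate case analysis on $\aut(V_4)\cong S_3$; all four cases (cyclic $P$, transitive fusion, $P\cap G'=1$, $|P\cap G'|=2$) check out, and in particular your identification of the possible focal subgroups from the non-transitive subgroups of $S_3$ is correct. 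For the \emph{if} direction the paper builds tH groups of the special orders $2^a$ and $4\cdot 3^b$ and then extends to arbitrary multiples via a coprime direct-product lemma, whereas you give uniform one-parameter families $C_{4k}\rtimes C_2$ of order $8k$ and $A_4\times C_k$ of order $12k$ covering all admissible $n$ at once; the verification that $(2k+1)^2\equiv 1\pmod{4k}$ and that $[a,b]=a^{-2k}$ has order $2$ is right. One very minor point: in the $A_4\times C_k$ family for $k$ odd, observing that the $A_4$-components generate $A_4$ does not by itself give generation of the full direct product (one should, e.g., take the $k$-th power of the second generator, or simply note that your even-$k$ argument via the commutator works uniformly for all $k$); this is a presentational wrinkle, not a gap.
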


Thus we can read off from the order of the group, that a group
is not tH. As we will see below it 
is crucial  that 
in the non-cases, i.e. if the group is not divisible
by 8 and not divisible by 12, this implies that the group is solvable. Relating
arithmetic properties of the order of a group to its 
structural properties is an old idea
which is carried out  e.g. in \cite{Pazderski1959}, where they deduce interesting properties
of a group 
as nilpotency and supersolvability just from its order.\\
 
One direction of \Cref{tH} follows from the following examples.

\begin{prop}\label{tH-constructions}
\begin{enumerate}
\item \label{power_two} For $a\geq 3$ we have that $2^a$ is tH.
\item \label{second_case} For $b\geq 1$ we have that $4\cdot3^b$ is tH.
\item \label{composite} If $n$ is tH, and $(n,m)=1$, then $nm$ is tH.
\end{enumerate}
\end{prop}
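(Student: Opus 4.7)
The proposition splits into three independent constructions, each of which amounts to exhibiting a concrete finite group of the prescribed order together with a two-element generating set whose commutator has order exactly $2$. For (\ref{power_two}) I would use the modular maximal-cyclic group
\[M_{2^a} \;=\; \langle x, y \mid x^{2^{a-1}} = y^2 = 1,\ yxy^{-1} = x^{1+2^{a-2}}\rangle,\]
which has order $2^a$ and for $a=3$ coincides with the $D_4$ already seen in \Cref{Hts-examples}. A direct calculation from the defining relation gives $[x,y] = x^{-2^{a-2}}$, which has order $2$ in $\langle x\rangle \cong C_{2^{a-1}}$; generation by $\{x,y\}$ is immediate from the presentation.

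For (\ref{second_case}) my candidate is the direct product $G = A_4 \times C_{3^{b-1}}$, of order $4 \cdot 3^b$. The alternating group $A_4$ is known to be tH by \Cref{Hts-examples}(\ref{A4}) via $a = (1,2,3)$ and $b = (1,2)(3,4)$, where moreover $b$ has order $2$. With $c$ a generator of $C_{3^{b-1}}$, I would take the pair $(a,1)$ and $(b,c)$ in $G$: its commutator is $([a,b],1)$, still of order $2$. Generation needs a short argument, namely that $(b,c)^3 (b,c)^{-1} = (1, c^2)$ lies in the subgroup and $c^2$ generates $C_{3^{b-1}}$ because $\gcd(2, 3^{b-1}) = 1$; this inserts the whole cyclic factor into the subgroup, and the rest of $G$ then follows.

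For (\ref{composite}), given a tH group $H$ of order $n$ with generators $a, b$ satisfying $[a,b]^2 = 1$, I would propose $G = H \times C_m$ and the pair $(a,1), (b,c)$ with $c$ a generator of $C_m$. Again $[(a,1),(b,c)] = ([a,b],1)$ has order $2$, and generation follows by the same coprimality trick: writing $k$ for the order of $b$ in $H$, we have $k \mid n$ so $\gcd(k, m) = 1$, and therefore $(b,c)^k = (1, c^k)$ generates $\{1\} \times C_m$. The only step in the whole proposition that goes beyond routine verification is this generation argument, and its key input is always the coprimality, which allows one to isolate a cyclic factor by taking a suitable power of one of the two chosen generators.
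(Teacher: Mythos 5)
Your proposal is correct and follows essentially the same route as the paper: the same semidirect product $C_{2^{a-1}}\rtimes C_2$ for (1), the same group $A_4\times C_{3^{b-1}}$ with the same generators for (2), and the same $G\times C_m$ construction for (3), differing only in minor bookkeeping (you isolate the cyclic factor via the order of the second generator rather than via B\'ezout coefficients for $n$ and $m$). No gaps.
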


\begin{proof}
We provide an explicit construction for each claim.\\[3mm]
(\ref{power_two}) 
Consider the group $G = C_{2^{a-1}} \rtimes_{\varphi} C_2$, where $\varphi:C_2 \to \aut(C_{2^{a-1}})$ 
is defined by $\varphi(1): 1 \mapsto 2^{a-2}+1$. As pair of generators we choose the
standard generators $x = (1,0)$ and $y = (0,1)$. Then 
$[x,y] = (2^{a-2},0)$ has order 2.\\[3mm]
(\ref{second_case}) 
The commutator of $A_4$ is $C_2\times C_2$, thus any two elements $x,y$ of $A_4$ which do not commute satisfy that $[x,y]$ has order 2. Now 
$A_4$ is generated by $(1,2,3), (1,2)(3,4)$, thus $A_4$ is tH, and our claim holds for $b=1$ (compare \Cref{Hts-examples}.\ref{A4}). For higher values of $b$ consider $G=A_4\times C_{3^{b-1}}$, and put $x=((1,2,3), 0)$, $y=((1,2)(3,4), 1)$. Then $[x,y]=((1,4)(2,3), 0)$ is of order 2. Moreover, 
$\langle x,y\rangle$ contains $y^{3^{b-1}+1} = (\mathrm{id}, 1)$
as well as $y^{3^{b-1}}=((1,2)(3,4), 0)$, thus $\langle x,y\rangle=G$, and we conclude that $G$ is tH.\\[3mm]
(\ref{composite}) 
Let $G$ be a tH group of order $n$, and let $x,y$ be suitable generators of $G$. Consider $G\times C_m$, and put $x'=(x,0)$, $y'=(y, 1)$. Since $(n,m)=1$ there exist $u, v\in\Z$ such that $un+vm=1$, thus $y'^{un}=(y^{un}, un) = (1_G, 1)$, and we conclude that $\langle x', y'\rangle$ contains $C_m$. Similarly $y'^{vm}=(y^{vm}, vm) = (y, 0)$, thus $\langle x', y'\rangle$ contains $G$. Hence $\langle x', y'\rangle = G\times C_m$, and we conclude that $nm$ is tH.
\end{proof}

It follows from Thompson's classification in \cite{Thompson} of minimal finite simple groups that
every group of order not divisible by 8 or 12 is solvable, as described in the following: The classification in 
\cite[Section 3, Main Theorem]{Thompson} tells us
that we have the following list of all minimal finite simple groups: $\psl(2,2^p)$ with $p$ a prime, $\psl(2,3^p)$ with $p$ an odd prim,
$\psl(2,p)$ with $p$ prime, $p \geq 5$ and $p^2+1 \equiv 0 \mod 5$, a Suzuki group $Sz(2^p)$ with $p$ an odd prime,
or $\psl(3,3)$. Thus the number of elements of each minimal simple group is divisible by 8 or 12. Minimal simple groups
are by definition non-abelian simple groups whose proper subgroups are all solvable.
Thus any non-abelian simple group is either minimal or has a subquotient of smaller order
which is again non-abelian and simple. Iterating this will 
finally lead to a minimal simple group and thus
by induction also the order of each finite non-abelian simple group 
is divisible by 8 or 12.
Finally, any finite non-solvable group $G$ has a subquotient which is non-abelian and simple, thus
the order of $G$ is as well divisible by 8 or 12.\\

In particular we may assume for the proof of the reverse
direction of \Cref{tH} that $G$ is solvable. We shall repeatedly use the
following, confer e.g. \cite[Theorem~VI.1.7]{Huppert}.
\begin{lem}[Hall]
Let $G$ be a finite solvable group, $\pi$ a set of prime divisors of $|G|$. Write $|G|=nm$, where all prime divisors of $n$ are in $\pi$, and all prime divisors of $m$ are not in $\pi$. Then there exists a subgroup $U$ of $G$ with $|U|=n$, and all subgroups of this form are conjugate.
\end{lem}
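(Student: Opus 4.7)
The plan is to prove both existence and conjugacy of Hall $\pi$-subgroups simultaneously by induction on $|G|$, with the base case $|G|=1$ trivial. For the inductive step, I would choose a minimal normal subgroup $N$ of $G$. Because $G$ is solvable, $N$ is an elementary abelian $p$-group for some prime $p$, say of order $p^k$. The argument then splits according to whether $p$ divides $n$ or $p$ divides $m$, and in each case one tries to reduce the problem to the smaller groups $G/N$ and (occasionally) certain proper subgroups of $G$.

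If $p\in\pi$, then $|G/N|=(n/p^k)\cdot m$ still satisfies the hypotheses of the lemma with the same $\pi$, so by the induction hypothesis there is a subgroup $\bar U\le G/N$ of order $n/p^k$. Its preimage in $G$ has order $n$ and is the desired Hall subgroup. If instead $p\mid m$, then applying induction to $G/N$ produces a subgroup $\bar U\le G/N$ of order $n$; its preimage $U^*$ has order $np^k$. If $U^*$ is a proper subgroup of $G$, induction applied to $U^*$ (which contains a normal $p$-Sylow $N$ of coprime index $n$) yields a subgroup of order $n$. The delicate case is when $U^*=G$, which means $N$ itself is a normal Sylow $p$-subgroup of $G$ and we must produce a complement to $N$. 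For this I would invoke a Frattini-style argument: pick a minimal normal subgroup $M/N$ of $G/N$, which is an elementary abelian $q$-group with $q\ne p$, choose a Sylow $q$-subgroup $Q$ of $M$, and use $G=M\cdot N_G(Q)=N\cdot N_G(Q)$. If $N_G(Q)\ne G$, induction on $N_G(Q)$ furnishes the required complement; if $N_G(Q)=G$ then $Q$ is normal in $G$, and passing to $G/Q$ (which has strictly smaller order and still satisfies the hypotheses) reduces matters to a smaller instance of the theorem.

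For the conjugacy statement, let $U_1,U_2$ be two subgroups of order $n$ in $G$, and consider their images $U_1N/N$ and $U_2N/N$ in $G/N$. These are both subgroups of order dividing $n$ whose orders together with $N$ fill up a Hall $\pi$-subgroup of $G/N$, so by the induction hypothesis applied to $G/N$ they are conjugate in $G/N$. Replacing $U_2$ by a conjugate, we may assume $U_1N=U_2N$. If $p\in\pi$, then $U_iN=U_i$ for $i=1,2$ because $U_i$ already contains an element count that forces $N\subseteq U_i$, giving $U_1=U_2$. If $p\notin\pi$, then $U_1N=U_2N$ is a subgroup of order $np^k$; if it is proper in $G$, induction on this subgroup yields a conjugating element, and if it equals $G$ we are again in the normal Sylow case treated above, where conjugacy of complements follows by the same Frattini reduction.

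The main obstacle is the complement subcase when $N$ is a normal Sylow $p$-subgroup of $G$: here the naive induction on $G/N$ cannot be pulled back because the short exact sequence $1\to N\to G\to G/N\to 1$ need not split in an obvious way. Overcoming it requires choosing an auxiliary minimal normal subgroup one level up (the pair $N\subset M$), transferring the problem to the normaliser of a Sylow of $M$ via the Frattini argument, and treating separately the degenerate situation where that normaliser is all of $G$. Once this case is handled, the rest of the proof is a routine packaging of the two inductive reductions described above.
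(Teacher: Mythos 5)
Your proof cannot be compared against the paper's argument for the simple reason that the paper does not prove this lemma at all: it is quoted as a known result with a pointer to \cite[Theorem~VI.1.7]{Huppert}, and only the subsequent lemma on the number of conjugates of a Hall subgroup is proved in the text. What you have written is, in substance, the classical inductive proof going back to P.~Hall: induct on $|G|$, pass to a minimal normal subgroup $N$, which by solvability is elementary abelian of order $p^k$, split on whether $p\in\pi$, and in the critical case where the preimage $U^*$ equals $G$ (so $N$ is a normal Sylow $p$-subgroup) construct a complement by taking a minimal normal $M/N$ of $G/N$, a Sylow $q$-subgroup $Q\leq M$, and applying the Frattini argument $G=NN_G(Q)$, inducting on $N_G(Q)$ when proper and passing to $G/Q$ when $Q$ is normal. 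This is correct and is exactly the route Huppert takes (the alternative being to dispatch the complement case with Schur--Zassenhaus, which for abelian $N$ is an elementary cohomological argument); your version buys self-containedness at the cost of the two-level reduction $N\subset M$. Two spots are stated too loosely and should be filled in: first, in the conjugacy argument for $p\in\pi$, the phrase ``an element count forces $N\subseteq U_i$'' should be replaced by the actual reason, namely that $U_iN$ is a $\pi$-subgroup of $G$ whose order divides $|G|$ and is at least $n$, and $n$ is the full $\pi$-part of $|G|$, so $U_iN=U_i$; second, the closing claim that conjugacy of complements ``follows by the same Frattini reduction'' hides the real content, which is that $U_i\cap M$ has order $|M|/|N|$ by Dedekind's identity and is hence a Sylow $q$-subgroup of $M$, that a Sylow conjugation lets you assume $U_1\cap M=U_2\cap M=Q$, and that $Q=U_i\cap M$ is normal in each $U_i$ because $M\trianglelefteq G$, placing both $U_i$ inside $N_G(Q)$ where induction (or the quotient $G/Q$, using $Q^g=Q$) finishes. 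Both gaps are routine to close and standard, so the proposal stands as a complete and correct replacement for the citation.
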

We call a subgroup of this form a $\pi$-Hall group. If $\pi$ consists of a single prime $p$, we write $p$ in place of $\{p\}$, and for a set $\pi$ we denote by $\pi'$ the complement of $\pi$. In particular, $2'$ denotes the set of all odd primes.
\begin{lem}
\label{Lem:not 2 mod 4}
Suppose that $n$ is even, but not divisible by 4. Then $G$ is not tH
\end{lem}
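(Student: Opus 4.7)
The plan is to show that whenever $|G|\equiv 2\pmod 4$, the commutator subgroup $G'$ has odd order, so it can contain no element of order 2 and in particular no commutator of order 2.

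First I would show that $G$ has a normal subgroup $N$ of index $2$ with $|N|$ odd. Since $|G|=2m$ with $m$ odd, a Sylow $2$-subgroup is generated by some involution $t\in G$. View $G$ inside $S_{|G|}$ via the left regular representation. Then $t$ acts on $G$ without fixed points (since $tg=g$ forces $t=1$), so as a permutation $t$ is a product of exactly $|G|/2=m$ disjoint transpositions. Because $m$ is odd, $t$ is an odd permutation. Composing the regular representation with the sign homomorphism therefore yields a surjective homomorphism $G\to\{\pm1\}$; its kernel $N$ is a normal subgroup of index $2$ whose order $m$ is odd.

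Since $G/N\cong C_2$ is abelian, $G'\leq N$. Every element of $N$ has order dividing $m$ and hence odd order, so $N$ contains no element of order $2$. For arbitrary $g,h\in G$ the commutator $[g,h]$ lies in $G'\subseteq N$, so $[g,h]$ cannot have order $2$. By the definition of tH this shows that $G$ is not tH.

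The only real subtlety is the opening step producing the index-$2$ normal subgroup; this is the classical Cayley-embedding trick for groups with a Sylow $2$-subgroup of order $2$, and once it is in hand the conclusion is immediate. Solvability of $G$ (guaranteed in the setting of Proposition~\ref{tH} by the discussion following Proposition~\ref{tH-constructions}) is not actually needed for this particular lemma, since the argument works for any finite group with $|G|\equiv 2\pmod 4$.
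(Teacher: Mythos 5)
Your proof is correct, and it reaches the same intermediate goal as the paper --- a normal subgroup of index $2$ and odd order containing $G'$ --- but by a genuinely different and more elementary route. The paper first invokes the solvability of $G$ (which in this context rests on the earlier discussion derived from Thompson's classification of minimal simple groups, since $|G|\equiv 2\pmod 4$ is not divisible by $8$ or $12$) and then applies Hall's theorem to produce a $2'$-Hall subgroup $U$ of index $2$; normality of $U$ and $G'\leq U$ then give that $|G'|$ is odd. You instead use the classical Cayley--Burnside trick: an involution generating the Sylow $2$-subgroup acts fixed-point-freely in the regular representation as a product of an odd number $m$ of transpositions, so the sign character is a surjection onto $C_2$ whose kernel is the desired odd-order normal subgroup. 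Your argument is self-contained, needs neither solvability nor Hall subgroups, and as you correctly observe works for any finite group of order $\equiv 2\pmod 4$; what the paper's approach buys is uniformity, since the Hall-subgroup machinery is set up anyway for the harder case $|G|\equiv 4\pmod 8$ treated in the subsequent corollary, where your elementary trick has no analogue. Both proofs are complete and correct.
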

\begin{proof}
If $|G|\equiv 2\pmod{4}$, then $G$ is solvable, since $|G|$ is not divisible by 4. Let $U$ be a $2'$-Hall group. Then $(G:U)=2$, thus $U$ is normal in $G$, and $G$ projects onto $C_2$. In particular $G'\leq U$, thus $(G:G')$ is even, and $|G'|$ is odd. But then $[x,y]\in G'$ cannot have even order.
\end{proof}

The following statement can be concluded e.g. from \cite[Theorem 9.3.1]{HallTheTheoryOfGroups}. We include the proof for the
convenience of the reader. It mimics the proof of Sylow's theorem. 
 
\begin{lem}
Let $\pi$ be a set of primes, $G$ a solvable group, $U$ a $\pi$-Hall group. Let $n$ be the number of conjugates of $U$. Then $n$ divides the $\pi'$-part of $|G|$, and there exist non-negative integers $a_p$, $p\in\pi$, such that $1+\sum_{p\in \pi} a_p p=n$.
\end{lem}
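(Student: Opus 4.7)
The plan is to adapt the standard double-counting argument from the proof of Sylow's theorem, using the conjugacy of $\pi$-Hall subgroups supplied by the preceding lemma.

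For the divisibility assertion, I would first use that all $\pi$-Hall subgroups are conjugate, so $n$ equals the index $(G:N_G(U))$. Since $U\leq N_G(U)$, this index divides $(G:U)=|G|/|U|$, which is by definition the $\pi'$-part of $|G|$.

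For the second assertion, I would let $U$ act by conjugation on the set $\mathcal{X}$ of its $G$-conjugates. The point $U\in\mathcal{X}$ is obviously fixed, and the heart of the argument is to show it is the \emph{only} fixed point. If $V\in\mathcal{X}$ is fixed, then $U\leq N_G(V)$; since $V$ is normal in $N_G(V)$, the set $UV$ is a subgroup, and its order $|U||V|/|U\cap V|$ is a $\pi$-number because $U$ and $V$ are $\pi$-groups. As $U$ is a $\pi$-Hall subgroup of $G$, no strictly larger $\pi$-subgroup can exist, so $UV=U$, forcing $V\leq U$ and then $V=U$ by comparison of orders.

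With uniqueness of the fixed point in hand, the remaining $U$-orbits on $\mathcal{X}$ each have size greater than $1$ and dividing $|U|$ by orbit–stabilizer, hence are $\pi$-numbers greater than $1$ and therefore divisible by some prime in $\pi$. Writing each such orbit size as a positive multiple of a chosen prime $p\in\pi$ and collecting the contributions yields non-negative integers $a_p$ with $n=1+\sum_{p\in\pi}a_p p$. The only non-routine step is the uniqueness of the fixed point, and even that is a short $\pi$-Hall maximality argument; the rest is bookkeeping modelled on the classical Sylow counting.
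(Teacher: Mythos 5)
Your proposal is correct and follows essentially the same argument as the paper: conjugacy of Hall subgroups gives $n=(G:N_G(U))$ dividing the $\pi'$-part, and the congruence comes from letting $U$ act on its conjugates and showing $U$ is the unique fixed point via the $\pi$-maximality of $|U|$ (the paper phrases this with $\langle U,U^g\rangle$ having $\pi$-order exceeding $|U|$, which is the same subgroup as your $UV$). No gaps.
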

\begin{proof}
$G$ acts on $\Omega=\{U^g|g\in G\}$ by conjugation. This action is transitive, thus $|\Omega|$ equals the index of a point stabiliser in $G$. The stabiliser of $U$ is $N_G(U)$, hence $|\Omega|=(G:N_G(U))$. Since $U\leq N_G(U)$, we have that $|\Omega|$ divides $(G:U)$, which is the $\pi'$-part of $|G|$.

The action of $U$ on $\Omega$ induces a decomposition of $\Omega$ into $U$-orbits. $U$ itself is stable under $U$-conjugation, hence $\{U\}$ is an orbit of size 1. We claim that there exists no further orbit of size 1. Suppose that $U^g$ is stabilised by $U$. Then $U$ normalises $U^g$, hence the group $\langle U, U^g\rangle$ generated by $U$ and $U^g$ contains $U^g$
as a normal subgroup, and  $\langle U, U^g\rangle/ U^g\cong  U/(U^g\cap U)$. Hence $|\langle U, U^g\rangle| = \frac{|U|^2}{|U\cap U^g|}$ is a $\pi$-number, which is strictly larger than $|U|$, since $U\neq U^g$. But $|U|$ is the largest $\pi$-number dividing $G$, thus $|\langle U, U^g\rangle|$ does not divide $|G|$, which is impossible. Hence we conclude that there exists precisely one orbit of size 1.

The size of every orbit divides $|U|$, hence the size of every orbit is either 1 or divisible by some prime divisor in $\pi$. For each orbit $o$ we pick one such prime divisor $p_o$, and we obtain an equation of the form $|\Omega|= 1 + \sum_{o \mbox{\footnotesize\ Orbit}} p_o \frac{|o|}{p_o}$. Collecting the integers $\frac{|o|}{p_o}$ we obtain the integers $a_p$ from our claim.
\end{proof}
\begin{cor}
Suppose that $|G|$ is divisible by 4, but not by 8 or 12. Then $G$ has a normal subgroup of index 4.
\end{cor}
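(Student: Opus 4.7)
The plan is to apply the preceding lemma with $\pi$ the set of all odd primes to show that $G$ has a unique, hence normal, $2'$-Hall subgroup of index $4$.

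Since $|G|$ is divisible by $4$ but not by $8$, the $2$-part of $|G|$ is exactly $4$. Since $|G|$ is divisible by $4$ but not by $12$, we have $3 \nmid |G|$. Writing $|G| = 4m$, the integer $m$ is odd and coprime to $3$, so every prime divisor of $m$ is at least $5$. The classification discussion following Proposition~\ref{tH-constructions} tells us that $G$ is solvable (as $|G|$ is divisible by neither $8$ nor $12$), so Hall's theorem provides a $2'$-Hall subgroup $V \leq G$ of order $m$ and index $4$.

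Let $n$ denote the number of $G$-conjugates of $V$. By the preceding lemma, $n$ divides the $2$-part of $|G|$, so $n \in \{1, 2, 4\}$; moreover there exist non-negative integers $a_p$, one for each odd prime $p$, with $n = 1 + \sum_{p} a_p \, p$. Inspecting the proof of that lemma, the primes $p$ with $a_p \neq 0$ arise as divisors of orbit sizes of $V$ acting by conjugation on its conjugates; each such orbit size divides $|V| = m$, so every such $p$ divides $m$ and is therefore at least $5$. Consequently $n - 1$ is either $0$ or at least $5$, which combined with $n \in \{1, 2, 4\}$ forces $n = 1$. Hence $V$ is the unique, and thus normal, $2'$-Hall subgroup of $G$, providing the desired normal subgroup of index $4$.

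No serious obstacle arises: the proof is essentially an arithmetic squeeze in the equation $n = 1 + \sum_p a_p p$, where the condition $\gcd(m, 6) = 1$ leaves no room for $n$ to equal $2$ or $4$. The only mild subtlety is that the restriction ``$a_p \neq 0$ implies $p$ divides $|V|$'' is not visible from the statement of the preceding lemma but must be extracted from its proof.
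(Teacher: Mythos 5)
Your proposal is correct and follows essentially the same route as the paper: solvability from the divisibility condition, Hall's theorem to get a $2'$-Hall subgroup of index $4$, and the counting lemma to force the number $n$ of conjugates to satisfy $n \mid 4$ and $n = 1 + \sum_{p\geq 5} a_p p$, whence $n=1$. Your remark that the relevant primes must divide $|V|$ (extracted from the lemma's proof) is the same point the paper handles by restricting the sum to primes dividing $|G|$.
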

\begin{proof}
The divisibility condition implies that $|G|$ is solvable. Let $U$ be a $2'$-Hall group. Then $U$ has index 4. Let $n$ be the number of conjugates of $U$. Then $n$ divides 4, and we can write $n$ as
\[
n=1+\underset{p>2}{\sum_{p||G|}} a_p p = 1+\sum_{p\geq 5} a_p p,
\]
since by assumption $|G|$ is not divisible by 3. But every integer representable as a sum as on the right is either 0 or $\geq 5$, hence we obtain $n=1$. But then $U$ is normal, and our claim follows.
\end{proof}

We can now finish the proof of \Cref{tH}.
\begin{proof}(of \Cref{tH})
The reverse direction follows from \Cref{tH-constructions}.
Suppose now that $n$ is tH, and let $G$ be a tH-group of order $n$. Then $n$ is trivially even. The case $n\equiv 2\pmod{4}$ is excluded by \Cref{Lem:not 2 mod 4}, thus $n\equiv 0\pmod{4}$. If $n\equiv 0\pmod{8}$, our claim is true, hence we may assume $n\equiv 4\pmod{8}$. If $n$ is not divisible by 3, then from the corollary we see that $G$ contains a normal subgroup $U$ of index 4. Since every group of order 4 is abelian, we obtain $G'\leq U$. But then $(G:G')$ is divisible by 4, thus $|G'|$ is odd, and we conclude that there cannot be elements $x,y\in G$ such that $[x,y]$ has even order. 
Hence $|G|$ is divisible by 3, and the claim is proven.
\end{proof}

Finally,  \Cref{Theo-tHnumbers} directly follows from \Cref{thegroup} and \Cref{tH}.
\bibliographystyle{amsalpha}
\bibliography{trans}

\end{document}